\documentclass{article}


\usepackage{amssymb,amsmath,color}
\usepackage{amsthm}
\usepackage{url}
\usepackage{tikz}
\usepackage{caption,subcaption}

\definecolor{red}{rgb}{1,0,0}
\definecolor{blue}{rgb}{.2,.2,.8}

\newtheorem{theorem}{Theorem}[section]
\newtheorem{corollary}[theorem]{Corollary}

\newtheorem{lemma}[theorem]{Lemma}

\theoremstyle{definition}
\newtheorem{example}{Example}

\begin{document}

\title{The partition function $p(n)$ in terms of the classical M\"{o}bius function}
\author{Mircea Merca\\
	\footnotesize Academy of Romanian Scientists\\
	\footnotesize Splaiul Independentei 54, Bucharest, 050094 Romania\\
	\footnotesize mircea.merca@profinfo.edu.ro
	\and Maxie D. Schmidt
	\\ 
	\footnotesize School of Mathematics, Georgia Institute of Technology\\
	\footnotesize Atlanta, GA 30332 USA\\
	\footnotesize maxieds@gmail.com, mschmidt34@gatech.edu
}
\date{}
\maketitle

\begin{abstract} 
In this paper, we investigate decompositions of the partition function $p(n)$ from the additive theory of partitions considering the famous M\"{o}bius function $\mu(n)$ from multiplicative number theory. Some combinatorial interpretations are given in this context. Our work extends several analogous identities proved recently relating $p(n)$ and Euler's totient function $\varphi(n)$. 
\\ 
\\
{\bf Keywords:}  Lambert series; M\"{o}bius function; $q$-series; partition function  
\\
\\
{\bf MSC 2010:}  11A25; 11P81; 05A17; 05A19
\end{abstract}

\section{Introduction} 

Very recently, the authors proved in \cite{MMMS} the following decomposition of the partition function $p(n)$:
$$p(n) =\frac{1}{2} \sum_{k=3}^{n+3} S^{(3)}_{n+3,k} \varphi(k),$$
where $S^{(r)}_{n,k}$ denotes the number of $k$'s in the partitions of $n$ 
with the smallest part at least $r$ and $\varphi(n)$ is Euler's totient function. 
This surprising result connects the famous classical totient function from multiplicative number theory with the function $p(n)$ 
from theory of partitions \cite{Andrews76}.

The aim of this paper is to prove similar expansions for the partition function $p(n)$ considering another famous object in multiplicative number theory: the M\"{o}bius function $\mu(n)$.
Recall that $\mu$ is defined for all positive integers $n$ and has its values in $\{-1, 0, 1\}$ depending on the factorization of $n$ into prime factors:
$$
\mu(n)=
\begin{cases}
0, & \text{if $n$ has a squared prime factor,}\\
(-1)^k, & \text{if $n$ is a product of $k\geqslant 0$ distinct primes.}
\end{cases}
$$
The classical M\"{o}bius function is central in multiplicative number theory and combinatorics. 
Here we prove two decomposition for the partition function $p(n)$ that combine $\mu(n)$ and our additive restricted partition function $S^{(r)}_{n,k}$ when $r\in\{1,2\}$.

\begin{theorem}
	\label{T1} 
	For $n\geqslant 0$, $r\in\{1,2\}$,
	\begin{equation*}\label{eqT1}
		p(n) = (-1)^{r-1} \sum_{k=r}^{n+r} S^{(r)}_{n+r,k} \mu(k).	
	\end{equation*}
\end{theorem}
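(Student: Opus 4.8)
The plan is to work entirely with generating functions regarded as formal power series in $q$. First I would establish the ordinary generating function of the restricted counting array $S^{(r)}_{n,k}$. Since $\prod_{j\geqslant r}(1-q^j)^{-1}$ enumerates partitions whose smallest part is at least $r$, marking the parts equal to a fixed $k\geqslant r$ (by replacing the factor $(1-q^k)^{-1}$ with $q^k(1-q^k)^{-2}$, so that each occurrence of $k$ is counted with multiplicity) yields
\[
\sum_{n\geqslant 0} S^{(r)}_{n,k}\, q^n = \frac{q^k}{1-q^k}\prod_{j\geqslant r}\frac{1}{1-q^j}.
\]

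Next I would weight by $\mu(k)$ and sum over $k$. Because $S^{(r)}_{n,k}=0$ whenever $k<r$, the inner sum in the theorem may be extended to all $k\geqslant 1$ without altering its value, so it suffices to compute
\[
\sum_{n\geqslant 0}\Big(\sum_{k\geqslant r} S^{(r)}_{n,k}\,\mu(k)\Big)q^n
= \Big(\prod_{j\geqslant r}\frac{1}{1-q^j}\Big)\sum_{k\geqslant r}\frac{\mu(k)\,q^k}{1-q^k}.
\]
The key input is the classical Lambert series identity $\sum_{k\geqslant 1}\mu(k)\,q^k/(1-q^k)=q$, which follows by expanding $(1-q^k)^{-1}=\sum_{m\geqslant 1}q^{km}$ and invoking $\sum_{d\mid n}\mu(d)=[n=1]$.

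For $r=1$ the M\"{o}bius Lambert series equals $q$ outright, and the displayed generating function collapses to $q\prod_{j\geqslant 1}(1-q^j)^{-1}=\sum_{n} p(n)\,q^{n+1}$; extracting the coefficient of $q^{n+1}$ gives the claim with sign $(-1)^{0}=1$. For $r=2$ I would first remove the $k=1$ term, obtaining $\sum_{k\geqslant 2}\mu(k)\,q^k/(1-q^k)=q-\frac{q}{1-q}=-\frac{q^2}{1-q}$; the crucial simplification is that the pulled-out factor $(1-q)^{-1}$ recombines with $\prod_{j\geqslant 2}(1-q^j)^{-1}$ to restore the full partition product, so the generating function becomes $-q^2\prod_{j\geqslant 1}(1-q^j)^{-1}=-\sum_n p(n)\,q^{n+2}$, whose coefficient of $q^{n+2}$ is $-p(n)$, matching $(-1)^{1}$.

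The computations are routine once these pieces are assembled; the only genuine obstacle is the clean derivation of the generating function for $S^{(r)}_{n,k}$ together with the observation that the $r=2$ case hinges on the telescoping recombination of $(1-q)^{-1}$ with the partial product $\prod_{j\geqslant 2}(1-q^j)^{-1}$. I would also note that every manipulation is legitimate at the level of formal power series, since for each fixed $n$ only finitely many $k$ contribute to the coefficient of $q^n$.
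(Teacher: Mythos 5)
Your proof is correct, and for the case $r=1$ it coincides exactly with the paper's argument: the paper likewise multiplies the generating function $\sum_{n\geqslant k} S^{(1)}_{n,k}q^n = \frac{q^k}{1-q^k}\cdot\frac{1}{(q;q)_\infty}$ by $\mu(k)$, sums over $k$, and invokes the Lambert series $\sum_{k\geqslant 1}\mu(k)q^k/(1-q^k)=q$ together with Euler's product. Where you diverge is the case $r=2$. The paper does not treat it by direct generating-function manipulation; instead it obtains it as the instance $r=2$ of Theorem \ref{Th2}, whose proof runs through the combinatorial recurrence of Lemma \ref{L1}, namely $S^{(r+1)}_{n+r,k} = S^{(r)}_{n+r,k} - S^{(r)}_{n,k}$, and the recursively defined coefficients $a_{r,j}$ and $b_{r,j}$; one then checks that $a_{2,j}=-\delta_{j,0}$, so the $r=2$ identity is a corollary of the $r=1$ identity. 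Your route instead removes the $k=1$ term from the Lambert series, getting $\sum_{k\geqslant 2}\mu(k)q^k/(1-q^k) = -q^2/(1-q)$, and lets the factor $(1-q)^{-1}$ recombine with $(q^2;q)_\infty^{-1}$ to restore $(q;q)_\infty^{-1}$. This is precisely the style of argument the paper itself demonstrates as an \emph{alternative} method, but only for the $r=3$ case (the second proof of Corollary \ref{T2}); applying it at $r=2$ is legitimate and self-contained. What your approach buys: both cases of Theorem \ref{T1} are handled uniformly, Lemma \ref{L1} and the $a_{r,j}$, $b_{r,j}$ machinery are avoided entirely, you re-derive the generating function \eqref{eqS} rather than citing it, and the sign $(-1)^{r-1}$ emerges transparently from the telescoped $-q^2/(1-q)$. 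What the paper's approach buys: by embedding Theorem \ref{T1} in Theorem \ref{Th2}, it produces an infinite family of identities valid for all $r\geqslant 1$, which is what powers the later corollaries for $r\in\{3,4,5\}$, whereas your computation, while cleaner for $r\leqslant 2$, does not by itself generalize to larger $r$ (for $r\geqslant 3$ the residual Lambert series no longer telescopes to a single monomial over $(q;q)_\infty$).
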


\begin{example}
	We have $p(4)=5$ because the partitions in question are:
	$$ 4 = 3+1 = 2+2 = 2+1+1 = 1+1+1+1.$$
	For $r=1$, we consider the partitions of $5$:
	$$ 5 = 4+1 = 3+2 = 3+1+1 = 2+2+1 = 2+1+1+1 = 1+1+1+1+1.$$
	We have
	$$\mu(1)\cdot 12 +\mu(2)\cdot 4+ \mu(3) \cdot 2 + \mu(4) \cdot 1 + \mu(5) \cdot 1 = 12 - 4 - 2 + 0 - 1 = 5.$$
	For $r=2$, the partitions of $6$ that do not contain $1$ as a part are:
	$$ 6 = 4+2 = 3+3 = 2+2+2 .$$
	We also have
	$$-\mu(2)\cdot 4 - \mu(3) \cdot 2 - \mu(4) \cdot 1 - \mu(6) \cdot 1 = 4 + 2 + 0 - 1 = 5.$$
\end{example}

The set of partitions of $n$ containing $r$ as a part can be obtained from the set of
unrestricted partitions of $n-r$ by adding to each partition a single $r$. This is an  example of a
bijection between two sets of partitions. For this reason, we can say that our theorem
establishes a connection between the set of partitions of $n$ that contain $r$ as a part, and
the set of partitions of $n$ with the smallest part at least $r$ when $r\in\{1,2\}$.

\begin{corollary}
	For $n\geqslant 0$,  $r\in\{1,2\}$,
	$$p(n-r)=(-1)^{r-1}\sum_{rt_r+(r+1)t_{r+1}+\cdots+nt_n=n} \mu(r) t_r+\mu(r+1)t_{r+1} +\cdots +\mu(n) t_n.$$
\end{corollary}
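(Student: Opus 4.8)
The plan is to obtain this as an immediate specialization of Theorem~\ref{T1} followed by an unfolding of the combinatorial definition of $S^{(r)}_{n,k}$. First I would apply Theorem~\ref{T1} with $n$ replaced by $n-r$; since $(n-r)+r=n$, the upper summation limit $\,(n-r)+r\,$ becomes $n$ and we get
\begin{equation*}
	p(n-r) = (-1)^{r-1} \sum_{k=r}^{n} S^{(r)}_{n,k}\, \mu(k),
\end{equation*}
valid for $n\geqslant r$, with the usual conventions $p(m)=0$ for $m<0$ and an empty sum taking care of the remaining range $0\leqslant n<r$.

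Next I would rewrite $S^{(r)}_{n,k}$ in terms of the individual partitions it counts. By definition $S^{(r)}_{n,k}$ is the total number of parts equal to $k$ summed over all partitions of $n$ whose smallest part is at least $r$. Encoding such a partition by its tuple of multiplicities $(t_r,t_{r+1},\ldots,t_n)$ of nonnegative integers subject to $rt_r+(r+1)t_{r+1}+\cdots+nt_n=n$, the number of $k$'s in that partition is exactly $t_k$, so
\begin{equation*}
	S^{(r)}_{n,k} = \sum_{rt_r+(r+1)t_{r+1}+\cdots+nt_n=n} t_k .
\end{equation*}
Substituting this into the displayed formula for $p(n-r)$ and interchanging the two finite summations (over $k$ and over the multiplicity tuples) gives
\begin{equation*}
	p(n-r) = (-1)^{r-1} \sum_{rt_r+(r+1)t_{r+1}+\cdots+nt_n=n} \sum_{k=r}^{n} \mu(k)\, t_k ,
\end{equation*}
and writing the inner sum out as $\mu(r)t_r+\mu(r+1)t_{r+1}+\cdots+\mu(n)t_n$ yields exactly the claimed identity.

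I do not anticipate a genuine obstacle, since every sum here is finite and the argument is purely a reindexing combined with the combinatorial reading of $S^{(r)}_{n,k}$. The only points that require care are bookkeeping ones: checking that the smallest-part-at-least-$r$ condition corresponds precisely to admitting the multiplicities $t_r,\ldots,t_n$ (and no smaller parts), and disposing of the boundary range $0\leqslant n<r$ separately via the empty-partition and $p(m)=0$ conventions so that the statement holds for all $n\geqslant 0$ as asserted.
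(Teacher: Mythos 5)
Your proposal is correct and matches the paper's intent exactly: the paper states this corollary immediately after Theorem~\ref{T1} without a separate proof, precisely because it follows by replacing $n$ with $n-r$ in Theorem~\ref{T1} and unfolding $S^{(r)}_{n,k}$ as the sum of the multiplicities $t_k$ over partitions of $n$ with smallest part at least $r$. Your handling of the boundary range $0\leqslant n<r$ via the conventions $p(m)=0$ for $m<0$ and empty sums is a sound bookkeeping addition.
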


Upon reflection, one expects that there might be an infinite family of such identities where Theorem \ref{T1} represents the first and the second entries.

\begin{theorem}\label{Th2}
	For $n\geqslant 0$, $r\geqslant 1$, we have the identity
	\begin{equation*}\label{eqTh2}
		\sum_{k=r}^{n+r} S^{(r)}_{n+r,k} \mu(k) = \sum_{j\geqslant 0} a_{r,j} p(n-j),
	\end{equation*}
	where the coefficients $a_{r,j}$ are given by the recurrence relations
	\begin{align*}
		a_{r+1,j} = a_{r,j+1} - \mu(r) b_{r,j+1}
		-\begin{cases}
			0,& \text{for $j+1<r$,}\\
			a_{r,j+1-r},& \text{for $j+1\geqslant r$,}
		\end{cases}	
	\end{align*}
	and
	$$
	b_{r+1,j} = b_{r,j} 
	-\begin{cases}
	0,& \text{for $j<r$,}\\
	b_{r,j-r},& \text{for $j\geqslant r$,}
	\end{cases}
	$$
	with the initial conditions:
	$$a_{1,j}=b_{1,j}=\begin{cases}
	1,& \text{for $j=0$,}\\
	0,& \text{for $j>0$.}
	\end{cases}$$
\end{theorem}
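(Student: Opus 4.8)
The plan is to pass to generating functions in $n$ and reduce the identity to a clean relation between three power series. First I would record the bivariate generating function for the restricted counting function, namely
$$\sum_{N\geqslant 0} S^{(r)}_{N,k}\, q^{N} = \frac{q^{k}}{1-q^{k}}\prod_{m\geqslant r}\frac{1}{1-q^{m}},$$
which follows by differentiating the refined product $\bigl(\prod_{m\geqslant r,\,m\neq k}(1-q^m)^{-1}\bigr)(1-zq^k)^{-1}$ with respect to $z$ and setting $z=1$, since $S^{(r)}_{N,k}$ counts parts equal to $k$ with multiplicity. Summing this against $\mu(k)$ over $k\geqslant r$ and shifting the exponent by $r$ then gives, with $P_r(q):=\prod_{m\geqslant r}(1-q^m)^{-1}$ and $\Lambda_r(q):=\sum_{k\geqslant r}\mu(k)\,q^k/(1-q^k)$,
$$\sum_{n\geqslant 0}\Bigl(\sum_{k=r}^{n+r} S^{(r)}_{n+r,k}\,\mu(k)\Bigr)q^{n} = q^{-r}P_r(q)\,\Lambda_r(q).$$
The factor $q^{-r}$ is legitimate because $\Lambda_r$ begins at $q^{r}$; the key analytic input is the classical Lambert-series evaluation $\Lambda_1(q)=\sum_{k\geqslant1}\mu(k)q^k/(1-q^k)=q$, which comes from $\sum_{d\mid n}\mu(d)=[n=1]$.

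Next I would introduce the ordinary generating functions $A_r(q):=\sum_{j\geqslant0}a_{r,j}q^{j}$ and $B_r(q):=\sum_{j\geqslant0}b_{r,j}q^{j}$ and translate the two coefficient recurrences into functional equations. The $b$-recurrence becomes simply $B_{r+1}(q)=(1-q^{r})B_r(q)$, so with $B_1=1$ an immediate induction yields the closed form $B_r(q)=\prod_{m=1}^{r-1}(1-q^{m})=P_r(q)/P_1(q)$; in particular $b_{r,0}=1$. Applying the standard shift operations to the $a$-recurrence gives, unconditionally,
$$A_{r+1}(q)=q^{-1}\bigl(A_r(q)-a_{r,0}\bigr)-\mu(r)\,q^{-1}\bigl(B_r(q)-b_{r,0}\bigr)-q^{r-1}A_r(q).$$

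The core of the argument is then to prove by induction on $r$ that $A_r(q)=q^{-r}B_r(q)\Lambda_r(q)$. For the base case $r=1$ this reads $A_1=q^{-1}\Lambda_1=q^{-1}\cdot q=1$, matching $a_{1,0}=1$. For the inductive step I would use $B_{r+1}=(1-q^r)B_r$ together with $\Lambda_{r+1}(q)=\Lambda_r(q)-\mu(r)\,q^r/(1-q^r)$ to compute $q^{-(r+1)}B_{r+1}\Lambda_{r+1}$ directly, and check that it equals $q^{-1}\bigl(A_r-\mu(r)B_r\bigr)-q^{r-1}A_r$. This matches the functional equation above once the constant terms are seen to cancel, i.e. once $a_{r,0}=\mu(r)$: but the inductive hypothesis gives $A_r=q^{-r}B_r\Lambda_r$, whose lowest-order term is $\mu(r)$ because $B_r(0)=1$ and $\Lambda_r$ starts with $\mu(r)q^r$, so indeed $a_{r,0}-\mu(r)b_{r,0}=\mu(r)-\mu(r)=0$. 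Finally, the right-hand side of the theorem has generating function $A_r(q)P_1(q)$, and using $A_r=q^{-r}B_r\Lambda_r$ together with $B_rP_1=P_r$ this equals $q^{-r}P_r(q)\Lambda_r(q)$, exactly the generating function computed for the left-hand side; comparing coefficients of $q^n$ then finishes the proof. I expect the main obstacle to be the bookkeeping in the inductive step — specifically, confirming that the constant-term subtractions in the $a$-recurrence vanish so that the two-index recurrence really does encode the clean functional equation $A_{r+1}=q^{-1}(A_r-\mu(r)B_r)-q^{r-1}A_r$. This is precisely why the auxiliary sequence $b_{r,j}$, which secretly records the finite product $\prod_{m<r}(1-q^m)=P_r/P_1$, must be carried alongside $a_{r,j}$.
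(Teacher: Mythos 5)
Your proof is correct, but it takes a genuinely different route from the paper's. The paper uses generating functions only for the base case $r=1$ (via the same Lambert series evaluation $\sum_{k\geqslant 1}\mu(k)q^k/(1-q^k)=q$); its inductive step is then carried out at the level of the sequences themselves, using the combinatorial recurrence of Lemma \ref{L1}, $S^{(r+1)}_{n+r,k}=S^{(r)}_{n+r,k}-S^{(r)}_{n,k}$, together with the observation that $S^{(r)}_{n+r,r}-S^{(r)}_{n,r}$ counts partitions of $n$ into parts at least $r$, which is how $b_{r,j}$ gets identified with the coefficients of $(q;q)_{r-1}$. You instead stay in the ring of formal power series throughout: you compute the left-hand side's generating function in the closed form $q^{-r}P_r(q)\Lambda_r(q)$ for every $r$ at once from \eqref{eqS} (here $P_r(q)=1/(q^r;q)_\infty$ and $\Lambda_r(q)=\sum_{k\geqslant r}\mu(k)q^k/(1-q^k)$), convert both recurrences into the functional equations $B_{r+1}=(1-q^r)B_r$ and $A_{r+1}=q^{-1}(A_r-a_{r,0})-\mu(r)q^{-1}(B_r-b_{r,0})-q^{r-1}A_r$, and induct on the closed form $A_r=q^{-r}B_r\Lambda_r$, so that Lemma \ref{L1} is never needed. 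A point in your favor: your argument isolates and proves the fact $a_{r,0}=\mu(r)$, which is exactly the cancellation the paper's inductive step also requires but passes over in silence --- when the paper's three shifted sums are recombined into $\sum_{j\geqslant 0} a_{r+1,j}\,p(n-j)$ via the stated recurrence, a leftover term $(a_{r,0}-\mu(r)b_{r,0})\,p(n+1)$ appears and must vanish. What the paper's route buys is combinatorial content: Lemma \ref{L1} has a bijective proof, and the partition-theoretic meaning of $b_{r,j}$ feeds directly into the corollaries in Section \ref{S3}. What your route buys is mechanical transparency and the explicit closed forms $B_r=(q;q)_{r-1}$ and $A_r=q^{-r}(q;q)_{r-1}\Lambda_r(q)$ as byproducts, from which facts such as $a_{r,0}=\mu(r)$ can be read off immediately.
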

By Theorem \ref{Th2}, we see that the case $r=2$ of Theorem \ref{T1} can be derived as a corollary of the case $r=1$.
We remark that for $r>1$ the coefficients $b_{r,j}$ satisfy the relation
$$b_{r,j} = b_{r-1,j} - (-1)^r \cdot b_{r-1,r(r-1)/2-j}.$$

\begin{figure}[h!]
	\begin{minipage}{\linewidth} 
		\begin{center} 
			\footnotesize
			\begin{equation*} 
				\boxed{ 
					\begin{array}{r|rrrrrrrrrrrrrrrrrrrrr} 
						& 0 & 1 & 2 & 3 & 4 & 5 & 6 & 7 & 8 & 9 & 10 & 11 & 12 & 13 & 14 & 15\\
						\hline 
						1 & 1 & 0 & 0 & 0 & 0 & 0 & 0 & 0 & 0 & 0 & 0 & 0 & 0 & 0 & 0 & 0 &\\
						2 & 1 & -1 & 0 & 0 & 0 & 0 & 0 & 0 & 0 & 0 & 0 & 0 & 0 & 0 & 0 & 0 & \\ 
						3 & 1 & -1 & -1 & 1 & 0 & 0 & 0 & 0 & 0 & 0 & 0 & 0 & 0 & 0 & 0 & 0 & \\
						4 & 1 & -1 & -1 & 0 & 1 & 1 & -1 & 0 & 0 & 0 & 0 & 0 & 0 & 0 & 0 & 0 &  \\
						5 & 1 & -1 & -1 & 0 & 0 & 2 & 0 & 0 & -1 & -1 & 1 & 0 & 0 & 0 & 0 & 0 &  \\
						6 & 1 & -1 & -1 & 0 & 0 & 1 & 1 & 1 & -1 & -1 & -1 & 0 & 0 & 1 & 1 & -1 &\\
						\hline
					\end{array}
				}
			\end{equation*}
		\end{center} 
		\subcaption*{(i) $b_{r,j}$} 
	\end{minipage} 
	\begin{minipage}{\linewidth} 
		\begin{center} 
			\footnotesize 
			\begin{equation*} 
				\boxed{ 
					\begin{array}{r|rrrrrrrrrrrrrrrrrrrrr} 
						& 0 & 1 & 2 & 3 & 4 & 5 & 6 & 7 & 8 & 9 & 10 & 11 & 12 & 13 & 14 & 15\\
						\hline 
						1 & 1 & 0 & 0 & 0 & 0 & 0 & 0 & 0 & 0 & 0 & 0 & 0 & 0 & 0 & 0 & 0 \\
						2 &-1 & 0 & 0 & 0 & 0 & 0 & 0 & 0 & 0 & 0 & 0 & 0 & 0 & 0 & 0 & 0 \\
						3 & -1 & 1 & 0 & 0 & 0 & 0 & 0 & 0 & 0 & 0 & 0 & 0 & 0 & 0 & 0 & 0 \\
						4 & 0 & -1 & 2 & -1 & 0 & 0 & 0 & 0 & 0 & 0 & 0 & 0 & 0 & 0 & 0 & 0 \\
						5 & -1 & 2 & -1 & 0 & 1 & -2 & 1 & 0 & 0 & 0 & 0 & 0 & 0 & 0 & 0 & 0  \\
						6 & 1 & -2 & 0 & 1 & 1 & -1 & 1 & -1 & -2 & 3 & -1 & 0 & 0 & 0 & 0 & 0  \\
						7 & -1 & 1 & 1 & 1 & -2 & -1 & 0 & -1 & 3 & -1 & 1 & -1 & 0 & 1 & -2 & 1  \\
						\hline 
					\end{array}
				} 
			\end{equation*} 
		\end{center} 
		\subcaption*{(ii) $a_{r,j}$} 
	\end{minipage}
	\caption{The coefficients $a_{r,j}$ and $b_{r,j}$}
	\label{F1} 
\end{figure} 

Theorem \ref{Th2} results directly from the following recurrence relation for $S^{(r)}_{n,k}$. 

\begin{lemma}\label{L1}
	For $n\geqslant k> r \geqslant 1$,
	$$  S^{(r+1)}_{n+r,k} = S^{(r)}_{n+r,k} - S^{(r)}_{n,k}.$$
\end{lemma}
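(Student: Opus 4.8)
The plan is to argue combinatorially by classifying the partitions that are counted on the right-hand side according to whether or not they use the part $r$. Recall that $S^{(r)}_{m,k}$ is the total number of times $k$ occurs as a part, summed over all partitions of $m$ whose smallest part is at least $r$. Write $\mathcal{P}^{(r)}(m)$ for the set of partitions of $m$ with smallest part $\geq r$. The key observation is that a partition lying in $\mathcal{P}^{(r)}(n+r)$ either avoids the part $r$ entirely — in which case its smallest part is in fact $\geq r+1$, so it belongs to $\mathcal{P}^{(r+1)}(n+r)$ — or it contains at least one part equal to $r$. This yields a disjoint decomposition
$$ \mathcal{P}^{(r)}(n+r) = \mathcal{P}^{(r+1)}(n+r) \ \sqcup \ \{\lambda \in \mathcal{P}^{(r)}(n+r) : r \text{ is a part of } \lambda\}. $$

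First I would establish a bijection between the second class above and $\mathcal{P}^{(r)}(n)$ by deleting a single copy of the part $r$. Adding or removing one $r$ interchanges partitions of $n+r$ having $r$ as a part with partitions of $n$ whose smallest part is $\geq r$, and these two operations are mutually inverse, so the correspondence is indeed a bijection. The crucial point is that, since $k > r$, neither operation touches any part equal to $k$; hence the number of $k$'s is preserved under the bijection. Summing the number of occurrences of $k$ over each class therefore gives
$$ S^{(r)}_{n+r,k} = S^{(r+1)}_{n+r,k} + S^{(r)}_{n,k}, $$
and rearranging produces the claimed identity.

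The only point requiring genuine care — and the step I expect to be the main obstacle — is verifying that the count of $k$'s really is invariant under adding or removing an $r$; this is exactly where the hypothesis $k > r$ enters, since if $k = r$ the map would shift the number of $k$'s by one and the identity would fail. I would also record the boundary bookkeeping: the hypotheses $n \geq k > r \geq 1$ force $n \geq 2$ and ensure that $k$ can genuinely appear as a part, so no degenerate cases intrude.

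Alternatively, one can verify the identity through generating functions. Marking each occurrence of the part $k$ by a variable and differentiating at $1$ shows that $\sum_m S^{(r)}_{m,k} q^m = \frac{q^k}{1-q^k}\prod_{j\geq r}(1-q^j)^{-1}$, and since $\prod_{j\geq r+1}(1-q^j)^{-1} = (1-q^r)\prod_{j\geq r}(1-q^j)^{-1}$, the series at level $r+1$ equals $(1-q^r)$ times the series at level $r$; comparing coefficients of $q^{n+r}$ reproduces the lemma. I would nonetheless present the bijection as the primary argument, since it is the more transparent of the two and exposes precisely why $k>r$ is needed.
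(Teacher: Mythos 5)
Your proposal is correct and follows essentially the same route as the paper: both decompose the partitions of $n+r$ with smallest part $\geqslant r$ according to whether $r$ occurs as a part, biject the latter class with partitions of $n$ with smallest part $\geqslant r$ by deleting a single $r$, and use $k>r$ to see that the count of $k$'s is unchanged. Your write-up is in fact slightly more careful than the paper's (which loosely says ``unrestricted partitions of $n$'' where the bijection really lands in partitions with smallest part $\geqslant r$), and the generating-function verification you add is a fine, if unnecessary, supplement.
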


\begin{proof}
	The number of $k$'s in the partitions of $n+r$ with the smallest part exactly $r$ is given by
	$$S^{(r)}_{n+r,k}-S^{(r+1)}_{n+r,k}.$$
	On the other hand, the set of partitions of $n+r$ with the smallest part exactly $r$  can be obtained from the set of unrestricted partitions of $n$ by adding to each partition a single $r$. For $k>r$, it is clear that the number of $k$'s in the partitions of $n+r$ with the smallest part exactly $r$ is
	$ S^{(r)}_{n,k}$	
	and the lemma is proved.
\end{proof}

There is a natural question related to Theorem \ref{Th2}: is it possible to have combinatorial interpretations for sums such as those in Theorem \ref{Th2} when $r>2$?
The cases $r\in\{3,4,5\}$ are investigated in this paper. As usual, the $n$th order backward difference is denoted by
$$\nabla^n [f](x) = \sum_{k=0}^n (-1)^k \binom{n}{k} f(x-k).$$

\begin{corollary}
	\label{T2} 
	For $n\geqslant 0$, the partitions of $n$ with no parts equal to $1$ are counted by  
	$$ 
	\nabla [p](n) = -\sum_{k=3}^{n+3} S^{(3)}_{n+3,k} \mu(k).$$
\end{corollary}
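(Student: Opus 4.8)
The plan is to read off the case $r=3$ of Theorem \ref{Th2} and evaluate the resulting coefficient sequence $a_{3,j}$ explicitly. Theorem \ref{Th2} gives
$$\sum_{k=3}^{n+3} S^{(3)}_{n+3,k}\,\mu(k) = \sum_{j\geq 0} a_{3,j}\,p(n-j),$$
so the entire statement collapses once we show that $a_{3,0}=-1$, $a_{3,1}=1$, and $a_{3,j}=0$ for all $j\geq 2$. Indeed, substituting these values yields $\sum_{k=3}^{n+3} S^{(3)}_{n+3,k}\,\mu(k) = -p(n)+p(n-1)$, and negating both sides gives $p(n)-p(n-1)$. Since by definition $\nabla[p](n)=\sum_{k=0}^{1}(-1)^k\binom{1}{k}p(n-k)=p(n)-p(n-1)$, this is exactly the claimed identity $\nabla[p](n) = -\sum_{k=3}^{n+3}S^{(3)}_{n+3,k}\,\mu(k)$.

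The first main step is to compute the required coefficients from the recurrences of Theorem \ref{Th2}, starting from the initial data $a_{1,j}=b_{1,j}=[\,j=0\,]$. Iterating the $b$-recurrence once gives $b_{2,0}=1$, $b_{2,1}=-1$, and $b_{2,j}=0$ for $j\geq 2$. Iterating the $a$-recurrence once, using $\mu(1)=1$ and the shift term $a_{1,j}$ (valid since $j+1\geq 1$ always), gives $a_{2,0}=a_{1,1}-b_{1,1}-a_{1,0}=-1$ and $a_{2,j}=0$ for $j\geq 1$. Finally, applying the $a$-recurrence at $r=2$ with $\mu(2)=-1$, namely $a_{3,j}=a_{2,j+1}+b_{2,j+1}-[\,j\geq 1\,]\,a_{2,j-1}$, I would obtain $a_{3,0}=a_{2,1}+b_{2,1}=-1$, $a_{3,1}=a_{2,2}+b_{2,2}-a_{2,0}=0+0+1=1$, and $a_{3,j}=0$ for $j\geq 2$. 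These are precisely the values recorded in row $3$ of Figure \ref{F1}(ii), and they complete the algebraic half of the proof.

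The second step is the combinatorial identification of $p(n)-p(n-1)$. Adjoining a single part equal to $1$ is a bijection between the unrestricted partitions of $n-1$ and the partitions of $n$ containing at least one $1$; hence $p(n-1)$ counts the partitions of $n$ that use the part $1$, and $p(n)-p(n-1)=\nabla[p](n)$ counts the partitions of $n$ with \emph{no} part equal to $1$. This establishes the stated enumerative meaning of $\nabla[p](n)$.

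I expect the only delicate point to be the bookkeeping in the coefficient computation: the $a$-recurrence simultaneously shifts the index $j\mapsto j+1$, subtracts a $b$-term weighted by $\mu(r)$, and subtracts a further shifted $a$-term governed by the piecewise condition $j+1\gtrless r$, and the sign of $\mu(2)=-1$ must be tracked carefully. Once $a_{2,j}$ and $b_{2,j}$ are pinned down correctly, the evaluation of $a_{3,j}$ is short and the corollary follows immediately from Theorem \ref{Th2}.
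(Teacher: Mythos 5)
Your proposal is correct and takes essentially the same route as the paper: the paper also deduces the corollary from the case $r=3$ of Theorem \ref{Th2} together with the well-known fact that $\nabla[p](n)=p(n)-p(n-1)$ counts the partitions of $n$ with no part equal to $1$, and your explicit recurrence computation $a_{3,0}=-1$, $a_{3,1}=1$, $a_{3,j}=0$ for $j\geqslant 2$ matches the values the paper simply reads off from Figure \ref{F1}. (The paper additionally sketches an independent generating-function proof, but that is offered as an alternative rather than the main argument.)
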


\begin{example}
	We have already seen above that the integer $6$ has $4$ partitions with no parts equal to $1$.
	The partitions of $9$ with the smallest part at least $3$ are:
	$$ 9 = 6+3 = 5+4 = 3+3+3.$$
	Considering that $\mu(4)=\mu(9)=0$, we have
	$$-\mu(3)\cdot 4- \mu(5) \cdot 1 - \mu(6) \cdot 1  = 4  + 1 - 1   = 4.$$
\end{example}

\begin{corollary}
	\label{T3} 
	For $n\geqslant 3$, the partitions of $n$  with no parts equal to $1$ and the largest part occurring more than once are counted by
	$$ 
	\nabla^2 [p](n) = -\sum_{k=4}^{n+5} S^{(4)}_{n+5,k} \mu(k).$$
\end{corollary}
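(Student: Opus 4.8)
The plan is to prove Corollary \ref{T3}, which asserts that
$$\nabla^2[p](n) = -\sum_{k=4}^{n+5} S^{(4)}_{n+5,k}\,\mu(k)$$
for $n\geqslant 3$, by connecting the right-hand sum to the coefficients $a_{4,j}$ of Theorem \ref{Th2} and then reconciling those coefficients with the explicit backward-difference expansion on the left. First I would invoke Theorem \ref{Th2} with $r=4$ and with $n$ replaced by $n+1$ (so that the upper partition-parameter $n+r$ becomes $(n+1)+4 = n+5$, matching the index on $S^{(4)}_{n+5,k}$), giving
$$\sum_{k=4}^{n+5} S^{(4)}_{n+5,k}\,\mu(k) = \sum_{j\geqslant 0} a_{4,j}\,p(n+1-j).$$
Reading off row $r=4$ of table (ii) in Figure \ref{F1}, the nonzero coefficients are $a_{4,1}=-1$, $a_{4,2}=2$, and $a_{4,3}=-1$, so the right-hand side collapses to $-p(n) + 2p(n-1) - p(n-2)$.

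The second step is to identify this with $-\nabla^2[p](n)$. By the definition given in the excerpt,
$$\nabla^2[p](n) = \sum_{k=0}^{2}(-1)^k\binom{2}{k} p(n-k) = p(n) - 2p(n-1) + p(n-2),$$
so indeed $-\nabla^2[p](n) = -p(n)+2p(n-1)-p(n-2)$, which matches the collapsed sum exactly. This yields $\sum_{k=4}^{n+5} S^{(4)}_{n+5,k}\,\mu(k) = -\nabla^2[p](n)$, i.e. the claimed identity, once the coefficient row has been justified rather than merely read from the figure.

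Justifying that row is where the real work lies, and it is the step I expect to be the main obstacle. I would compute $a_{4,j}$ from the recurrences of Theorem \ref{Th2} rather than trusting the table: starting from $a_{1,j}=b_{1,j}=\delta_{j,0}$, I would iterate the $b$-recurrence to obtain rows $b_{2,\cdot}, b_{3,\cdot}$ (using $\mu(1)=1,\mu(2)=-1,\mu(3)=-1$ at the appropriate points), and then iterate the $a$-recurrence
$$a_{r+1,j} = a_{r,j+1} - \mu(r)\,b_{r,j+1} - \begin{cases} 0,& j+1<r,\\ a_{r,j+1-r},& j+1\geqslant r,\end{cases}$$
three times to reach $a_{4,j}$. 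One must check carefully that all coefficients $a_{4,j}$ vanish for $j=0$ and for $j\geqslant 4$, leaving only the three surviving values $(a_{4,1},a_{4,2},a_{4,3})=(-1,2,-1)$; the vanishing of the tail is what makes the finite backward difference appear. I would also verify the range condition $n+r\geqslant k>r$ of Lemma \ref{L1} (the engine behind Theorem \ref{Th2}) holds throughout, which forces the lower bound $n\geqslant 3$ stated in the corollary. Finally, the combinatorial description --- partitions of $n$ with no part equal to $1$ and with the largest part repeated --- I would obtain by interpreting $\nabla^2[p](n)=\nabla[\nabla[p]](n)$ through Corollary \ref{T2}: the first difference removes partitions containing a $1$, and the second difference removes those whose largest part is unrepeated, an inclusion–exclusion that I would phrase as a bijection paralleling the remark following Theorem \ref{T1}.
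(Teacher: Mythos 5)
Your treatment of the identity itself is correct and is essentially the paper's: both proofs invoke Theorem \ref{Th2} with $r=4$ (argument shifted to $n+1$), and your plan to recompute the coefficients from the recurrences rather than trust Figure \ref{F1} does work out --- one gets $a_{4,0}=0$, $(a_{4,1},a_{4,2},a_{4,3})=(-1,2,-1)$ and a vanishing tail, so the sum collapses to $-p(n)+2p(n-1)-p(n-2)=-\nabla^2[p](n)$ exactly as you say.

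The gap is in the combinatorial half of the statement, namely that $\nabla^2[p](n)$ counts partitions of $n$ with no $1$'s and repeated largest part; your last paragraph only gestures at this. The claim ``the second difference removes those whose largest part is unrepeated'' amounts to asserting that partitions of $n$ with no $1$'s and \emph{unique} largest part are equinumerous with partitions of $n-1$ with no $1$'s, and the bijection you cite as a model (the remark after Theorem \ref{T1}, i.e.\ adjoining a single part) cannot prove this: adjoining a part equal to $1$ to a partition of $n-1$ with no $1$'s destroys the ``no $1$'s'' condition, and adjoining any other part need not control the multiplicity of the largest part. The paper resolves this by conjugating first: by conjugation, partitions of $n$ with no $1$'s correspond to partitions of $n$ with repeated largest part, so that set $\mathcal{A}_1(n)$ has cardinality $\nabla[p](n)$; then $\mathcal{A}_1(n)$ splits into partitions containing a $1$ (in bijection with $\mathcal{A}_1(n-1)$ by deleting that $1$, which preserves ``repeated largest part'' once $n\geqslant 3$) and those without, giving $|\mathcal{C}_1(n)|=\nabla[p](n)-\nabla[p](n-1)=\nabla^2[p](n)$. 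Your route can instead be repaired by the bijection ``subtract $1$ from the unique largest part,'' valid for $n\geqslant 3$, but some such argument must be supplied. Relatedly, your explanation of the hypothesis $n\geqslant 3$ via the range condition of Lemma \ref{L1} is off the mark: the identity itself already holds for smaller $n$ (at $n=2$ both sides equal $1$), while the combinatorial count there is $0$; the bound $n\geqslant 3$ is exactly what the combinatorial bijection requires, since at $n=2$ the partition $1+1$ has repeated largest part but deleting its $1$ leaves a partition whose largest part occurs only once.
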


\begin{example}
	The integer $8$ has $3$ partitions with no parts equal to $1$ and the largest part occurring more than once:
	$$4+4=3+3+2=2+2+2+2.$$
	The partitions of $13$ with the smallest part at least $4$ are:
	$$ 13 = 9+4 = 8+5 = 7+6 = 5+4+4.$$
	We consider only the parts that are products of distinct primes and obtain:
	$$ - \mu(5) \cdot 2 - \mu(6) \cdot 1 - \mu(7) \cdot 1 - \mu(13) \cdot 1
	= 2-1+1+1  = 3.$$
\end{example}

\begin{corollary}
	\label{T4}
	For $n\geqslant 6$, the partitions of $n$ with no parts equal to $1$, at most one part equal to $2$, and the largest part occurring more than once are counted by
	$$\nabla^2 [p](n)-\nabla^2 [p](n-4)=-\sum_{k=5}^{n+5} S^{(5)}_{n+5,k} \mu(k).$$
\end{corollary}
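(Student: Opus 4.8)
The plan is to separate the statement into two parts: the algebraic identity
$$\nabla^2[p](n)-\nabla^2[p](n-4)=-\sum_{k=5}^{n+5} S^{(5)}_{n+5,k}\,\mu(k),$$
and the combinatorial interpretation of its left-hand side. The first part I would obtain as the $r=5$ instance of Theorem \ref{Th2}. By that theorem, $\sum_{k=5}^{n+5} S^{(5)}_{n+5,k}\,\mu(k)=\sum_{j\geqslant 0} a_{5,j}\, p(n-j)$, and the coefficient vector $(a_{5,j})_{j\geqslant 0}=(-1,2,-1,0,1,-2,1,0,0,\dots)$ can be read off from Figure \ref{F1}(ii) (or recomputed from the recurrences defining $a_{r,j}$ and $b_{r,j}$ applied to the $r=4$ row). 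Negating and grouping the six nonzero terms gives
$$-\sum_{j\geqslant 0} a_{5,j}\, p(n-j)=\bigl(p(n)-2p(n-1)+p(n-2)\bigr)-\bigl(p(n-4)-2p(n-5)+p(n-6)\bigr),$$
which is exactly $\nabla^2[p](n)-\nabla^2[p](n-4)$. This step is purely formal once the coefficients are in hand.

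For the combinatorial interpretation I would build directly on Corollary \ref{T3}, which identifies $\nabla^2[p](m)$, for $m\geqslant 3$, with the number $A(m)$ of partitions of $m$ having no part equal to $1$ and the largest part occurring more than once. Writing $c(n)$ for the quantity we wish to count (no part $1$, at most one part $2$, largest part repeated), the target reduces to showing $A(n)-c(n)=\nabla^2[p](n-4)$ for $n\geqslant 6$; that is, the partitions counted by $A(n)$ but excluded by the ``at most one $2$'' condition — those with no $1$'s, largest part repeated, and at least two parts equal to $2$ — should number $\nabla^2[p](n-4)$. Here I would use the map that deletes a pair of parts equal to $2$. When the largest part is at least $3$, deletion does not disturb the largest part, so this map is a bijection onto the partitions of $n-4$ with no $1$'s and largest part repeated whose largest part is at least $3$, with inverse given by adjoining two $2$'s.

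The main obstacle is the boundary behaviour of this bijection, and it is exactly where the shift by $4$ comes from. When the largest part equals $2$ the partition is $2^{m}$, and deleting two $2$'s can destroy the ``largest part repeated'' property (for $n=6$ the partition $2+2+2$ maps to the single part $2$), so these all-$2$'s partitions must be accounted for separately. I would organize the count as $A(n)-c(n)=A_{\geqslant 3}(n-4)+[\,n\text{ even}\,]$, where $A_{\geqslant 3}$ restricts $A$ to largest part at least $3$ and $[\,n\text{ even}\,]$ equals $1$ when $n$ is even and $0$ otherwise, and then match this against $\nabla^2[p](n-4)$: for $n\geqslant 8$ one has $\nabla^2[p](n-4)=A(n-4)=A_{\geqslant 3}(n-4)+[\,n\text{ even}\,]$ by Corollary \ref{T3}, while the two residual cases $n=6$ and $n=7$ are checked directly against the algebraic values $\nabla^2[p](2)=1$ and $\nabla^2[p](3)=0$. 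The delicate point is that the contribution of the all-$2$'s partition of $n$ coincides with the algebraic value $\nabla^2[p](n-4)$ precisely in these small cases, where the combinatorial interpretation of Corollary \ref{T3} no longer literally applies. A cleaner alternative, which sidesteps the case analysis, would be to verify the formal power series identity $\sum_n c(n)q^n \equiv (1-q^4)(1-q)^2 P(q)$ modulo terms of degree below $6$, where $P(q)=\prod_{k\geqslant1}(1-q^k)^{-1}$, thereby absorbing all boundary corrections into a single low-degree polynomial.
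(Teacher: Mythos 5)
Your proposal is correct and follows essentially the same route as the paper: the algebraic identity is the case $r=5$ of Theorem \ref{Th2} (your coefficients $a_{5,j}$ match Figure \ref{F1}), and the combinatorial part rests on adjoining/deleting a pair of $2$'s, which is exactly the paper's decomposition of the set $\mathcal{A}_2(n)$ of partitions with no $1$'s and repeated largest part into $\mathcal{B}_2(n)$ (at least two $2$'s) and $\mathcal{C}_2(n)$ (at most one $2$). The place where you go beyond the paper is worth emphasizing: the paper asserts outright that adding two $2$'s gives a bijection from $\mathcal{A}_2(n-4)$ onto $\mathcal{B}_2(n)$, but this fails at $n=6$, since the partition $2+2+2$ lies in $\mathcal{B}_2(6)$ yet has no preimage (deleting two $2$'s leaves the single part $2$, whose largest part is not repeated), so that $|\mathcal{B}_2(6)|=1$ while $|\mathcal{A}_2(2)|=0$. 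The corollary nevertheless holds at $n=6$ only because the interpretation in Corollary \ref{T3} also breaks down below $3$ --- one has $\nabla^2[p](2)=1\neq 0=|\mathcal{A}_2(2)|$ --- and the two discrepancies cancel, which is precisely what your separate accounting of the all-$2$'s partition and your direct check of the cases $n\in\{6,7\}$ make rigorous. So your argument is not merely equivalent to the paper's; it repairs a genuine, if minor, boundary gap in the published proof.
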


\begin{example}
	The integer $12$ has $4$ partitions with no parts equal to $1$, at most one part equal to $2$, and the largest part occurring more than once:
	$$6+6=5+5+2=4+4+4=3+3+3+3.$$
	The partitions of $17$ with the smallest part at least $5$ are:
	$$ 17 = 12+5 = 11+6 = 10+7 = 9+8 = 7+5+5 = 6+6+5.$$
	Considering only the parts that are products of distinct primes, we have:
	$$-\mu(5)\cdot 4 - \mu(6) \cdot 3 - \mu(7) \cdot 2 - \mu(10) \cdot 1 - \mu(11) \cdot 1- \mu(17) \cdot 1\\
	= 5 -3 +2 -1+1 = 4.$$	
\end{example}

We will prove these corollaries in Section \ref{S3}.

\section{Proof of Theorem \ref{Th2}}

First, we prove the case $r=1$. As we found in \cite{MMMS}, the generating function of $S^{(r)}_{n,k}$ is given by
\begin{align}
	\sum_{n=k}^{\infty} S^{(r)}_{n,k} q^n 
	= \frac{q^k}{1-q^k} \cdot \frac{1}{(q^r; q)_{\infty}},\label{eqS}
\end{align}
where $q \in \mathbb C$, $|q|<1$, and the usual $q$-Pochhammer symbol is defined by
$$(a;q)_\infty = \prod_{k=0}^{\infty} (1-aq^k).$$
Then we can write
\begin{equation} \label{eq1a}
	\frac{q}{(q;q)_\infty}  = \frac{1}{(q;q)_\infty} \sum_{k=1}^{\infty} \frac{\mu(k) q^k}{1-q^k} = \sum_{n=1}^{\infty} \sum_{k=1}^{\infty}  \mu(k) S^{(1)}_{n,k} q^n, 
\end{equation}
where we have invoked the well-known Lambert series generating function \cite[Theorem 263]{Hardy}
$$\sum_{k=1}^{\infty} \frac{\mu(k) q^k}{1-q^k} = q.$$
On the other hand, we have
\begin{align}
	\frac{q}{(q;q)_\infty} = \sum_{n=1}^{\infty} p(n-1) q^{n}, \label{eq2a}
\end{align}
where we have used Euler's well-known partition generating function formula:
$$\sum_{n=0}^\infty p(n) q^n = \frac{1}{(q;q)_\infty}.$$
By \eqref{eq1a} and \eqref{eq2a}, we deduce that
\begin{equation}
	\label{eqn_pnm1_Snk1_rel}
	p(n-1) = \sum_{k=1}^{n}  \mu(k) S^{(1)}_{n,k}, 
\end{equation} 
and the case $r=1$ is proved.

According to Lemma \ref{L1}, we have
\begin{align*}
	&\sum_{j\geqslant 0} a_{r+1,j} p(n-j) \\
	& \quad=\sum_{k=r+1}^{n+r+1} S^{(r+1)}_{n+r+1,k} \mu(k) \\
	& \quad= \sum_{k=r+1}^{n+1+r} S^{(r)}_{n+1+r,k} \mu(k) - \sum_{k=r+1}^{n+1+r} S^{(r)}_{n+1,k} \mu(k)\\
	& \quad= \sum_{k=r}^{n+1+r} S^{(r)}_{n+1+r,k} \mu(k) - \sum_{k=r}^{n+1} S^{(r)}_{n+1,k} \mu(k) - \left( S^{(r)}_{n+1+r,r}-S^{(r)}_{n+1,r} \right) \mu(r)\\ 
	& \quad= \sum_{j\geqslant 0} a_{r,j} p(n+1-j) - \sum_{j\geqslant 0} a_{r,j} p(n+1-r-j)-\mu(r)\sum_{j\geqslant 0} b_{r,j} p(n+1-j),
\end{align*}	
where we have used that
$$S^{(r)}_{n+r,r}-S^{(r)}_{n,r}=\sum_{j\geqslant 0} b_{r,j} p(n-j)$$
counts the partitions of $n$ into parts greater than or equal to $r$. 
In other words, $b_{r,j}$ is the coefficient of $q^j$ in $(q;q)_{r-1}$. 
So we have
\begin{align*}
	\sum_{j\geqslant 0} b_{r+1,j} q^j  
	= (1-q^r) (q;q)_{r-1} 
	= (1-q^r) \sum_{j\geqslant 0} b_{r,j} q^j
	= \sum_{j\geqslant 0} b_{r,j} q^j	- \sum_{j\geqslant 0} b_{q,j} q^{r+j}.
\end{align*}
Equating the coefficients of $q^j$ in this relation gives the recurrence relation for $b_{r,j}$.
This completes the proof.

\section{Proofs of Corollaries}
\label{S3}

The coefficients $a_{r,j}$ used in these corollaries are listed in Figure \ref{F1}. We remark that these corollaries can be proved independently of Theorem \ref{Th2} using the method of generating functions. 
We will illustrate this fact for Corollary \ref{T2}.

\subsection{Proof of Corollary \ref{T2}.}
It is well known that the first difference of the partition function $$\nabla [p](n)=p(n)-p(n-1)$$
counts the partitions of $n$ with no parts equal to $1$. 
The proof follows easily considering the case $r=3$ of Theorem \ref{Th2}.

In addition, we provide a proof of Corollary \ref{T2} involving the method of generating functions. By \eqref{eqS}, with $r$ replaced by $3$, we can write
\begin{align*}
	& \frac{(1-q)(1-q^2)}{(q;q)_\infty}\sum_{k=1}^{\infty} \frac{\mu(k) q^k}{1-q^k} \\
	& \qquad= \frac{(1-q)(1-q^2)}{(q;q)_\infty} \left(\frac{q}{1-q}-\frac{q^2}{1-q^2} \right) + \sum_{k=3}^{\infty} \sum_{n=3}^{\infty} \mu(k) S^{(3)}_{n,k} q^n \\
	& \qquad= \frac{q-q^2}{(q;q)_\infty} + \sum_{k=3}^{\infty} \sum_{n=3}^{\infty} \mu(k) S^{(3)}_{n,k} q^n 
\end{align*}
and 
\begin{align*}
	\frac{(1-q)(1-q^2)}{(q;q)_\infty}\sum_{k=1}^{\infty} \frac{\mu(k) q^k}{1-q^k} 
	= \frac{q-q^2}{(q;q)_\infty}-\frac{q^3-q^4}{(q;q)_\infty}.
\end{align*}
It follows that 
$$ \sum_{n=3}^\infty \left(\sum_{k=3}^{n} \mu(k) S^{(3)}_{n,k} \right) q^n 
= -\frac{q^3-q^4}{(q;q)_\infty}=-\sum_{n=3}^\infty \nabla [p](n-3) q^{n},$$
where we have invoked the generating function for $p(n)$, completing the proof.

\subsection{Proof of Corollary \ref{T3}}

The conjugate of a partition with no parts equal to $1$ is a partition with the largest part occurring more than once.
We denote by $\mathcal{A}_1(n)$ the set of the partitions of $n$ with the largest part occurring more than once. This set is the union of the following disjoint sets:
\begin{enumerate}
	\item[] $\mathcal{B}_1(n)$: the set of partitions of $n$ with the smallest part exactly $1$ and the largest part occurring more than once.
	\item[] $\mathcal{C}_1(n)$: the set of the partitions of $n$ with no parts equal to $1$ and the largest part occurring more than once.
\end{enumerate}
The set  $\mathcal{B}_1(n)$ can be obtained from the set  $\mathcal{A}_1(n-1)$ by adding to each partition a single $1$. It is clear that
\begin{align*}
	|\mathcal{C}_1(n)| &= |\mathcal{A}_1(n)| - |\mathcal{A}_1(n-1)|
	= \nabla[p](n) - \nabla[p](n-1) = \nabla^2 [p](n).
\end{align*}
The proof follows easily considering the case $r=4$ of Theorem \ref{Th2}.

\subsection{Proof of Corrolary \ref{T4}}

We denote by $\mathcal{A}_2(n)$ the set of the partitions of $n$ with no parts equal to $1$ and the largest part occurring more than once. This set is the union of the following disjoint sets:
\begin{enumerate}
	\item[] $\mathcal{B}_2(n)$: the set of partitions of $n$ with the smallest part exactly $2$ in which the smallest part and the largest part occurring more than once.
	\item[] $\mathcal{C}_2(n)$: the set of the partitions of $n$ with no parts equal to $1$, at most one part equal to $2$, and the largest part occurring more than once.
\end{enumerate}

The set  $\mathcal{B}_2(n)$ can be obtained from the set  $\mathcal{A}_2(n-4)$ by adding to each partition two parts equal to $2$. It is clear that
\begin{align*}
	|\mathcal{C}_2(n)| &= |\mathcal{A}_2(n)| - |\mathcal{A}_2(n-4)|
	= \nabla^2[p](n) - \nabla^2[p](n-4).
\end{align*}
The proof follows easily considering the case $r=5$ of Theorem \ref{Th2}.

\section{Conclusions} 

A new method for generating combinatorial identities has been introduced in the paper by Theorem \ref{Th2}. 
This theorem can be generalized in many ways. The M\"{o}bius function $\mu(n)$ can be replaced by any arithmetic function that allows us to prove the case $r=1$. For example, replacing the M\"{o}bius function $\mu(n)$ by the Euler totient function $\varphi(n)$, we can obtain 
identities involving the partition function $p(n)$:
\begin{align}
	& \sum_{k=1}^{n+1} S^{(1)}_{n+1,k} \varphi(k) = \sum_{k=0}^n (k+1) p(n-k)\label{eqC1}\\
	& \sum_{k=2}^{n+2} S^{(2)}_{n+2,k} \varphi(k) = \sum_{k=0}^n p(n-k)\label{eqC2}.
\end{align}
These sums have some combinatorial interpretations. The sums in \eqref{eqC1} give the sum of parts, counted without multiplicity, in all partitions of $n+1$ \cite[A014153]{OEIS}. The sums in \eqref{eqC2} count the number of $1$'s in all partitions of $n+1$, that is, $S^{(1)}_{n+1,1}$. Note that the identity \eqref{eqC2} has been recently proved in \cite[Theorem 1]{MMMS}.


Finally, we give two new identities involving the  M\"{o}bius function $\mu(n)$, the partition function $p(n)$ and prime numbers.

\begin{corollary}
	For $n \geqslant 0$ and $\alpha$ a prime number, we have the following identities:
	\begin{enumerate}
		\item[(i)] $\displaystyle{\sum_{k=1}^{n+1} S^{(1)}_{n+1,k} \mu(\alpha k) = - \sum_{k\geqslant 0} p(n+1-\alpha^k)}$
		\item[(ii)] $\displaystyle{ \sum_{k=2}^{n+2} S^{(2)}_{n+2,k} \mu(\alpha k) =  \sum_{k\geqslant 0} \big( p(n+1-\alpha^k) - p(n+2-\alpha^{k+1}) \big) }$.
	\end{enumerate}
\end{corollary}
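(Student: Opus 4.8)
The plan is to reduce both identities to a single evaluation of a twisted Lambert series and then convolve against the generating function \eqref{eqS} for $S^{(r)}_{n,k}$, exactly as in the proofs of Theorem \ref{T1} and Corollary \ref{T2}. The conceptual heart is to obtain a closed form for
$$\sum_{k=1}^{\infty}\frac{\mu(\alpha k)\,q^k}{1-q^k}$$
when $\alpha$ is prime, and everything afterward is routine coefficient extraction.

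To evaluate that series, I would expand each summand by $\frac{q^k}{1-q^k}=\sum_{m\geqslant 1}q^{mk}$ and collect the coefficient of $q^N$, which is $\sum_{k\mid N}\mu(\alpha k)$. Since $\alpha$ is prime, $\mu(\alpha k)=0$ whenever $\alpha\mid k$ and $\mu(\alpha k)=\mu(\alpha)\mu(k)=-\mu(k)$ whenever $\alpha\nmid k$. Writing $N=\alpha^a M$ with $\gcd(\alpha,M)=1$, the divisors of $N$ coprime to $\alpha$ are exactly the divisors of $M$, so the coefficient collapses to $-\sum_{d\mid M}\mu(d)=-[M=1]$ by the standard M\"obius identity $\sum_{d\mid M}\mu(d)=[M=1]$. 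Hence the coefficient of $q^N$ equals $-1$ precisely when $N$ is a power of $\alpha$ (including $N=\alpha^0=1$) and $0$ otherwise, giving the key identity
$$\sum_{k=1}^{\infty}\frac{\mu(\alpha k)\,q^k}{1-q^k}=-\sum_{a\geqslant 0}q^{\alpha^a}.$$

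For part (i), I would multiply this identity by $1/(q;q)_\infty$. By \eqref{eqS} with $r=1$, the left-hand side becomes $\sum_{n}\bigl(\sum_{k=1}^{n}\mu(\alpha k)S^{(1)}_{n,k}\bigr)q^n$, while the right-hand side becomes $-\bigl(\sum_{a\geqslant 0}q^{\alpha^a}\bigr)\sum_{m\geqslant 0}p(m)q^m$. Extracting the coefficient of $q^{n+1}$ from both sides yields (i) at once, with $\sum_{a\geqslant 0}p(n+1-\alpha^a)$ understood as a finite sum since $p(m)=0$ for $m<0$.

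For part (ii), I would work with $r=2$, where \eqref{eqS} contributes the factor $1/(q^2;q)_\infty=(1-q)/(q;q)_\infty$ and where the summation index begins at $k=2=r$. I would therefore first subtract the $k=1$ term $\mu(\alpha)\,q/(1-q)=-q/(1-q)$ from the Lambert series above, then multiply by $(1-q)/(q;q)_\infty$ and split the result into a clean piece $q/(q;q)_\infty$ and a piece $-(1-q)\bigl(\sum_{a\geqslant 0}q^{\alpha^a}\bigr)/(q;q)_\infty$. Extracting the coefficient of $q^{n+2}$, the contribution $p(n+1)$ from the first piece cancels the $a=0$ term of the power-of-$\alpha$ sum, and after reindexing $a\mapsto a+1$ the expression collapses to $\sum_{a\geqslant 0}\bigl(p(n+1-\alpha^a)-p(n+2-\alpha^{a+1})\bigr)$, which is (ii). The only real care needed here is the bookkeeping of the shifted indices and the cancellation of the $a=0$ terms; by contrast, the arithmetic evaluation of the Lambert series in the first step is the main obstacle, and the rest is mechanical.
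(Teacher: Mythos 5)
Your proposal is correct, and its first half coincides with the paper's argument: the paper also rests on the evaluation $\sum_{k\geqslant 1}\mu(\alpha k)q^k/(1-q^k)=-\sum_{a\geqslant 0}q^{\alpha^a}$ and proves part (i) by multiplying through by $1/(q;q)_\infty$ and extracting coefficients, exactly as you do. Two points of genuine difference are worth noting. First, the paper merely \emph{asserts} the twisted Lambert series identity, whereas you prove it (via $\mu(\alpha k)=0$ for $\alpha\mid k$, $\mu(\alpha k)=-\mu(k)$ for $\alpha\nmid k$, and $\sum_{d\mid M}\mu(d)=[M=1]$ applied to the $\alpha$-free part of $N$); your write-up is therefore more complete on the step that you rightly identify as the conceptual heart. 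Second, for part (ii) the paper does not redo any generating-function work: it invokes Lemma \ref{L1} with $r=1$, i.e.\ the coefficient-level recurrence $S^{(2)}_{n+2,k}=S^{(1)}_{n+2,k}-S^{(1)}_{n+1,k}$ for $k\geqslant 2$, and deduces (ii) from two applications of (i), at the cost of handling the $k=1$ boundary terms $\mu(\alpha)\bigl(S^{(1)}_{n+2,1}-S^{(1)}_{n+1,1}\bigr)=-p(n+1)$. You instead work directly with $r=2$, using $1/(q^2;q)_\infty=(1-q)/(q;q)_\infty$ and subtracting the $k=1$ term of the Lambert series before extracting the coefficient of $q^{n+2}$. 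These two mechanisms are close cousins --- multiplying by $(1-q)$ is precisely the generating-function avatar of Lemma \ref{L1} --- and the final cancellation of the $a=0$ term is the same in both; your route is self-contained and avoids the combinatorial fact about counts of $1$'s, while the paper's route buys brevity by recycling part (i) and machinery already established.
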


\begin{proof}
	Considering that
	$$\sum_{n=0}^\infty \frac{\mu(\alpha n) q^n}{1-q^n} = -\sum_{n=0}^{\infty} q^{\alpha^n},$$
	the proof of the first identity is similar to the proof of the case $r=1$ of Theorem \ref{Th2}.
	Invoking Lemma \ref{L1}, the second identity follows as a consequence of the first identity. 	
\end{proof} 

The formula in (i) of the corollary can be expressed in the alternate form  
\[
\sum_{k=1}^{\left\lceil \frac{n+1}{\alpha} \right\rceil} \sum_{d=1}^{\alpha-1} 
S_{n+1,\alpha (k-1)+d}^{(1)} \mu(\alpha (k-1)+d) = 
\sum_{j=0}^{\lfloor \log_{\alpha}(n+1) \rfloor} p(n+1-\alpha^j), 
\] 
for any prime $\alpha$. 
Similarly, the expansion on the right-hand side of the identity in (ii) above may be written as 
a finite sum with respect to $n$. 

For further reading at the intersection of the additive and multiplicative branches of number theory, we recommend \cite{Alladi,Jameson,Merca,Schneider,Wakhare}.

\paragraph{Acknowledgments.}  
The authors thank the referees for their helpful comments.

\bigskip


\end{document}